\newcommand{\Zb}{{\mathbb Z}}
\newcommand{\Nb}{{\mathbb N}}
\newcommand{\Fb}{{\mathbf F}}
\newtheorem{thm}{Theorem}[section]
\newtheorem{cor}[thm]{Corollary}
\newtheorem{lem}[thm]{Lemma}
\newtheorem{question}[thm]{Question}
\newtheorem{definition}[thm]{Definition}
\newtheorem{example}[thm]{Example}
\newtheorem{remark}[thm]{Remark}
\newtheorem{proposition}[thm]{Proposition}
\theoremstyle{definition}
\title{About Got\^{o}'s method showing surjectivity of word maps}
\author{Abdelrhman Elkasapy}
\address{Abdelrhman Elkasapy, MPI-MIS, Inselstra\ss e 22,
04103 Leipzig, Germany, and Mathematics Department, South Valley University, Qena, Egypt}
\email{elkasapy@mis.mpg.de}
\author{Andreas Thom}
\address{Andreas Thom, Univ.\ Leipzig,
PF 100920, 04009 Leipzig , Germany}
\email{andreas.thom@math.uni-leipzig.de}
\begin{document}

\onehalfspace

\begin{abstract}
Let $\Fb$ be the free group on two letters. For $\omega \in \Fb$ we study the associated word map $\omega \colon SU(n) \times SU(n) \to SU(n)$. Extending a method of Got\^{o}, we show that for $\omega$ not in the second derived subgroup $\Fb^{(2)}$ of $\Fb$, there are infinitely many $n \in \Nb$ such that the associated word map $\omega : SU(n) \times SU(n) \rightarrow SU(n)$ is surjective.
\end{abstract}

\maketitle

\section{Introduction}

Let $\Fb$ be the free group on two letters and let $\omega \in \Fb$. The word map $\omega : SU(n) \times SU(n) \rightarrow SU(n)$ is the natural map, which is given by evaluating $\omega$ on the pair of matrices in $SU(n)$. It has been asked by Michael Larsen at the 2008 Spring Central Section Meeting of the AMS in Bloomington whether for every non-trivial $\omega \in \Fb$ and $n \in \Nb$ high enough, the associated word map $\omega \colon SU(n) \times SU(n) \to SU(n)$ is surjective.
The aim of this work is to provide evidence for a positive answer to this question and prove the surjectivity for some classes of word maps. For convenience we restrict our study to the case $SU(n)$, even though our methods extend to other compact Lie groups.

Questions about the size of the image of word maps for general groups $G$ (in place of $SU(n)$) have a long history and led to interesting connections with various fields of mathematics. The first result of general type is a theorem of Amand Borel \cite{borel} asserting that any non-trivial word map is dominant (as a map between affine complex algebraic varieties) if $G$ is a simple algebraic group; in particular its image is Zariski dense.
Despite this general result, the images of word maps can be very small for compact groups. Indeed, the second author showed in \cite{thom} that for fixed $n \in \Nb$ and any neighborhood $U$ of $1_n \in SU(n)$, there exists $\omega \in \Fb \setminus \{e\}$ such that the image of the associated word maps is contained in $U$. This result is already non-trivial for $n=2$ and led to answers to various long-standing questions in non-commutative harmonic analysis \cite{thom}.

Recently, there has been an extensive study of the size of word maps for finite simple groups, see \cites{lar1,lar2,lar3} and the references therein. One of the high points was the proof of the Ore conjecture \cite{MR2654085}, asserting that every element in a non-abelian finite simple group is a commutator.

Let us come back to $G=SU(n)$. We will observe that Larsen's question becomes more complicated if $\omega \in \Fb$ lies deeper in the lower central series. As usual, we define the lower central series by $\Fb^{(0)} := \Fb$ and $\Fb^{(k+1)} := [\Fb^{(k)},\Fb^{(k)}]$. It is easy to see that for $\omega \not \in \Fb^{(1)}$, $\omega \colon SU(n) \times SU(n) \to SU(n)$ is surjective, see for example Lemma \ref{notzero}. Hence, the first non-trivial case is $w(a,b)=[a,b]:=aba^{-1}b^{-1}$, the commutator of the generators of $\Fb$. This case -- unlike for finite simple groups -- was solved by T{\^o}yama already in 1949. He proved that any element in $SU(n)$ can be written as a commutator $[u,v]$ for suitably chosen elements $u,v \in SU(n)$, see ~\cite{MR0035283} for more details. In the same year G\^{o}to put this result in a more general framework, see ~\cite{goto}. We will recall Got\^{o}'s proof and will take Got\^{o}'s method as the basis for the proof of our result which covers all words $\omega \not \in \Fb^{(2)}$. 

For $n \in \Nb$, we denote by ${\rm lpf}(n)$ the least prime factor of $n$.
Our main result is the following:

\begin{thm} \label{mainresult}
Let $\Fb$ be the free group on two generators and $\omega \in \Fb$. If $\omega \not \in \Fb^{(2)}$, then there exists an integer $k \in \Nb$, such that for all $n \in \Nb$ with ${\rm lpf}(n) \geq k$, the word map $\omega\colon SU(n)\times SU(n)\rightarrow SU(n)$ is surjective.
\end{thm}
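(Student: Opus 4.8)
The plan is to reduce the general statement to two special cases by exploiting the structure of $\Fb/\Fb^{(2)}$. Write $\omega \in \Fb \setminus \Fb^{(2)}$. Either $\omega \notin \Fb^{(1)}$, in which case surjectivity for all $n$ follows immediately from Lemma \ref{notzero} (the map already hits everything because one can arrange the word to behave like a power of a single generator after freezing the other variable), or $\omega \in \Fb^{(1)} \setminus \Fb^{(2)}$. So the heart of the matter is the second case: $\omega$ is a product of commutators but does \emph{not} lie in the second derived subgroup. The key algebraic observation is that $\Fb^{(1)}/\Fb^{(2)}$ is a free abelian group, and the class of $\omega$ in this quotient is a non-zero element; after a change of basis of $\Fb$ by an automorphism (which does not affect surjectivity of the word map, since $\mathrm{Aut}(\Fb)$ acts by precomposition with a homeomorphism of $SU(n) \times SU(n)$), one should be able to normalize $\omega$ so that it is congruent to $[a,b]^m$ modulo $\Fb^{(2)}$ for some $m \geq 1$, i.e. $\omega(a,b) = [a,b]^m \cdot r(a,b)$ with $r \in \Fb^{(2)}$.

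Next I would follow Got\^{o}'s strategy as recalled earlier in the paper. Got\^{o}'s method for the commutator word produces, for a given target $g \in SU(n)$, a torus element $t$ and a permutation-type element so that $g$ becomes a commutator; the essential input is a clever choice of a maximal torus together with an element conjugating it in a controlled way, reducing the problem to a statement about roots of the characteristic polynomial. To handle $\omega = [a,b]^m \cdot r$, the idea is to restrict the word map to a well-chosen subgroup on which the ``error term'' $r(a,b)$ becomes trivial or negligible. Concretely, if $\mathrm{lpf}(n) \geq k$ for suitable $k$ depending only on $\omega$ (through, say, the maximal length of $r$ or the number of commutators appearing), then $n$ has no small prime factors, and one can embed a product of blocks or use a large cyclic/torus structure inside $SU(n)$ so that the relevant group generated by the images of $a,b$ is metabelian or even two-step nilpotent in a way that kills $r$; on such a subgroup $\omega$ acts exactly as $[a,b]^m$, and one invokes Got\^{o}/T\^{o}yama-type surjectivity for the $m$-th power of the commutator map.

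The main technical obstacle — and the reason the hypothesis $\mathrm{lpf}(n) \geq k$ appears — is controlling the contribution of $r \in \Fb^{(2)}$ uniformly in $n$. One cannot simply perturb it away, because the word map is highly nonlinear; instead the strategy must be to find, inside $SU(n)$, a copy of a group (a product of unitary groups of smaller size, or the normalizer of a maximal torus intersected with something) on which \emph{every} element of $\Fb^{(2)}$ of bounded complexity evaluates to the identity while $[a,b]^m$ still ranges over a conjugation-rich set. The divisibility condition enters because the dimension $n$ must be divisible by enough distinct primes (or be a prime power larger than $k$) to accommodate the block decomposition or the cyclic-shift construction of the required size; when $\mathrm{lpf}(n) \geq k$, $n$ is guaranteed to have a prime factor $p \geq k$, and one builds the auxiliary subgroup out of a $p$-cycle-type element. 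I expect the delicate point to be verifying that the Got\^{o}-style surjectivity argument — choosing eigenvalues and a conjugating element to realize a prescribed target — survives being restricted to this smaller, arithmetic-constrained subgroup, and in particular that the $m$-th power map on the relevant torus remains surjective, which again uses that $\gcd(m, \text{something involving } n) $ is controlled once the small primes are excluded.
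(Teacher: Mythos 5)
Your reduction to the case $\omega \in \Fb^{(1)} \setminus \Fb^{(2)}$ via Lemma \ref{notzero} matches the paper, and your intuition that elements of $\Fb^{(2)}$ ``die'' once one substitutes into a metabelian subgroup of $SU(n)$ (such as $\langle\sigma\rangle \cdot T$ with $\sigma \in N(T)$, $h \in T$, as in Got\^{o}'s proof) is correct and is indeed what makes the whole scheme work. However, the central normalization step in your argument is false, and this is not a repairable gap: you cannot, in general, find $\alpha \in \mathrm{Aut}(\Fb)$ so that $\alpha(\omega) \equiv [a,b]^m \pmod{\Fb^{(2)}}$. The quotient $\Fb^{(1)}/\Fb^{(2)}$ is a free rank-one module over $\Zb[\Fb/\Fb^{(1)}] \cong \Zb[x^{\pm 1},y^{\pm 1}]$, with $[a,b]$ a generator; $\mathrm{Aut}(\Fb)$ does not act transitively on the non-zero classes (for instance, it preserves filtration by powers of the augmentation ideal, and the class of $[a,b][a,b^{-1}][a^{-1},b][a^{-1},b^{-1}]$ corresponds to $(1-x^{-1})(1-y^{-1})$, which lies in the square of the augmentation ideal, while $1$ does not). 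More tellingly: if such a normalization always existed, then $p_\omega$ would become $m(t-1)$ in the new basis, Got\^{o}'s original argument would apply verbatim, and the theorem would hold for \emph{all} $n$ with no restriction on $\mathrm{lpf}(n)$ --- contradicting the very shape of the statement you are asked to prove.

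The paper's proof instead associates to each $\omega \in \Fb^{(1)}$ (and each choice of free basis of $\Fb$) a Laurent polynomial $p_\omega(t) \in \Zb[t,t^{-1}]$ via the homomorphism $p_{[a^n,b^m]}(t) = m(t^n - 1)$, and proves (Lemma \ref{large}) that surjectivity holds whenever $p_\omega$ has no $n$-th root of unity $\ne 1$ among its roots. Proposition \ref{new} then shows, via the explicit basis changes $a \mapsto ab^q$ computed in Proposition \ref{ab}, that for $\omega \notin \Fb^{(2)}$ some basis makes $p_\omega \ne 0$ --- but one cannot in general control \emph{which} nonzero polynomial appears, only that it is nonzero. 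This is exactly why the hypothesis $\mathrm{lpf}(n) \ge k$ appears (Corollary \ref{corlem}): if $p_\omega \ne 0$ has ``spread'' $k$ and $n$ has no prime factor below $k+1$, then a primitive $m$-th root of unity with $m \mid n$ has minimal polynomial of degree $\varphi(m) > k$ and hence cannot divide $p_\omega$. Your explanation of the divisibility condition (``$n$ must be divisible by enough distinct primes'') is backwards: the condition requires $n$ to have \emph{no small} prime factors, and it is an arithmetic constraint on possible roots of $p_\omega$, not a constraint needed to build a block decomposition inside $SU(n)$.
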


It is not known to us if the restriction on the integer $n \in \Nb$ in the assumptions of the previous theorem is necessary. We will prove Theorem \ref{mainresult} at the end of Section \ref{mainsec}.
For particular words we can say more. We define the sequence of Engel words by
\begin{eqnarray*} e_0(a,b)=a, \quad e_k(a,b)= [e_{k-1},b], \ \text{for} \ k \geq1.
\end{eqnarray*}
It is easy to see that $e_k \not \in \Fb^{(2)}$ for all $k \in \Nb$ so that the previous theorem applies. However, in this case we can show:
\begin{thm} \label{engeltheorem}
For all $k,n \in \Nb$, the $k$-th Engel word map $e_k \colon SU(n) \times SU(n) \to SU(n)$ is surjective. 
\end{thm}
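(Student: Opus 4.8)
The plan is to prove Theorem~\ref{engeltheorem} by an explicit construction that refines the classical argument (going back to T\^oyama) that every element of $SU(n)$ is a commutator. In contrast to Theorem~\ref{mainresult}, no hypothesis on $n$ will be needed, and no induction on $k$ is required. The cases $n=1$ and $k=0$ are trivial, so fix $n\geq 2$ and $k\geq 1$. Since every group word $w$ satisfies $w(hah^{-1},hbh^{-1})=h\,w(a,b)\,h^{-1}$, and every element of $SU(n)$ is conjugate to a diagonal matrix, it suffices to realise every diagonal matrix $D=\mathrm{diag}(\lambda_1,\dots,\lambda_n)\in SU(n)$ as a value $e_k(a_0,b_0)$.

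The idea is to take $b_0$ to be a suitable scalar multiple of the cyclic permutation matrix $P$ (the Coxeter element of $S_n$; a central scalar is inserted only to force $\det b_0=1$ and is invisible to commutators), and to look for $a_0=\mathrm{diag}(\mu_1,\dots,\mu_n)$ in the diagonal maximal torus. A one-line matrix computation gives $[\mathrm{diag}(\nu_1,\dots,\nu_n),b_0]=\mathrm{diag}(\nu_1\nu_n^{-1},\nu_2\nu_1^{-1},\dots,\nu_n\nu_{n-1}^{-1})$, which is again diagonal; iterating $k$ times one obtains $e_k(a_0,b_0)=\mathrm{diag}\bigl((\Delta^k\mu)_1,\dots,(\Delta^k\mu)_n\bigr)$, where $\Delta$ is the multiplicative cyclic difference operator $(\Delta\mu)_j=\mu_j\mu_{j-1}^{-1}$ on $U(1)^n$, indices read mod $n$.

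This reduces the theorem to a statement about the torus $V:=\{\mu\in U(1)^n: \mu_1\cdots\mu_n=1\}$, namely that $\Delta^k\colon V\to V$ is surjective. Indeed $\Delta$ maps $V$ into $V$ (in fact it maps all of $U(1)^n$ into $V$), and its kernel is the group of $n$-th roots of unity embedded diagonally, which is finite; hence $\Delta\colon V\to V$ is a morphism of compact connected abelian Lie groups of equal dimension $n-1$ with finite kernel, and is therefore surjective, and so is every power $\Delta^k$. Given a diagonal $D\in SU(n)$, its eigenvalue tuple lies in $V$, so there is $\mu\in V$ with $\Delta^k\mu$ equal to it; the constraint $\mu\in V$ is precisely $\det a_0=1$, so $a_0:=\mathrm{diag}(\mu)\in SU(n)$ works, and conjugating back handles an arbitrary $g\in SU(n)$.

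No real obstacle arises here; the only points needing care are the determinant-one bookkeeping (choosing the scalar so that $b_0\in SU(n)$, and identifying the condition $\det a_0=1$ with membership in $V$) and verifying that repeatedly taking the commutator with $b_0$ preserves the diagonal torus and implements the operator $\Delta$.
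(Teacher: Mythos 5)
Your proof is correct and it is a genuinely self-contained variant of the argument, rather than a reproduction of the paper's. The paper's route is to show, via the polynomial homomorphism $\omega \mapsto p_\omega$ computed in the basis $\{b,a\}$, that $p_{e_k}(t) = (1-t)^k$ (using the recursion $p_{[\omega,b]}(t) = (1-t)p_\omega(t) + t\,p_\omega(1)$), and then to invoke Lemma~\ref{large}: since $(1-\xi)^k\neq 0$ for every nontrivial $n$-th root of unity $\xi$, the operator $\mathrm{Ad}(p_{e_k}(\sigma))$ is invertible on the Cartan subalgebra $\mathfrak h$. You instead work entirely at the group level: you take $b_0$ to be the same Coxeter element $\sigma=(1,2,\dots,n)$ (suitably scaled into $SU(n)$, the scalar being invisible in commutators) and observe directly that $x\mapsto[x,b_0]$ stabilizes the diagonal torus and implements the cyclic difference operator $\Delta$, so $e_k(a_0,b_0)=\mathrm{diag}(\Delta^k\mu)$. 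Your surjectivity argument for $\Delta^k$ on $V=\{\mu\in U(1)^n:\prod\mu_i=1\}$ — a homomorphism of compact connected Lie groups of equal dimension with finite kernel is surjective — cleanly replaces the eigenvalue analysis of $\mathrm{Ad}(\sigma)$ on $\mathfrak h$. The geometric core (the Coxeter element acting on the maximal torus with no fixed vectors) is shared; what you gain is a short, concrete, and elementary proof that avoids the exponential map, the $p_\omega$ formalism, and Lemma~\ref{large} altogether, with no restriction on $n$. What you lose is the connection to the paper's general framework, which is needed anyway for Theorem~\ref{mainresult} and makes $p_{e_k}=(1-t)^k$ a pleasant corollary. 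Both are valid; yours could be presented as a direct generalization of the T\^oyama--Got\^o proof recalled in Theorem~\ref{lie}.
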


This result complements results for finite simple groups of Lie type which were obtained by Bandman-Grunewald-Garion in \cite{Engel}. We will prove Theorem \ref{engeltheorem} in the beginning of Section \ref{engelwords}.

\vspace{0.2cm}

The article is structured as follows. Section \ref{prel} contains some preliminaries on the combinatorics of the free group and on Lie theory. We also review some known results concerning word maps and present a streamlined form of Got\^{o}'s proof from 1949. Section \ref{mainsec} contains the proof of the main result. Here, we have to go into some tedious computations with commutators which we think are necessary to control the effect of a base change in the free group on the natural basis of the natural basis of the derived subgroup. In Section \ref{engelwords} contains a study of Engel words and ends with some questions we could not answer so far.

\vspace{0.2cm}

This article contains work which is part of the PhD-project of the first author.

\section{Preliminaries and review of known results} \label{prel}

\subsection{The free group}
\label{introgroups}

Let us fix some notation. Let $\Fb$ be the free group on two generators $a$ and $b$. A word in $a,b$ takes the form $a^{n_1} b^{m_1} \cdots a^{n_k}b^{m_k}$ for $n_i,m_i \in \Zb$. It is well known that $\Fb^{(1)}$ is the free group on the set $S:= \{[a^n,b^m] \mid n,m \in \Zb,nm \neq 0\}$. Indeed, this is a special case of Proposition 4 in Chapter I, \S1.3  of \cite{Serre}.
Note that $[a^n,b^m]^{-1} = [b^m,a^n]$, so that every element in $\Fb^{(1)}$ has a unique expression as a product of commutators $\{[a^n,b^m] \mid n,m \in \Zb, nm \neq 0\} \cup \{[b^n,a^m] \mid n,m \in \Zb, nm \neq 0\}$, such that $[a^n,b^m]$ and $[b^m,a^n]$ do not appear as consecutive letters.

Let $G$ be a group. For a sequence $g_1,\dots,g_n \in G$ we write $\prod_{i=1}^n g_i$ to denote the ordered product $g_1g_2\cdots g_n \in G$. Note that this is non-standard since we do not assume that the $g_i$'s commute.

\subsection{Some Lie theory}

We denote by $SU(n)$ the group of $n \times n$ special unitary matrices and by $1_n \in SU(n)$ the identity matrix.
The subgroup of diagonal matrices in $SU(n)$ is denoted by
$$T := \left\{{\rm diag}(e^{i\theta_1} ,...,e^{i\theta_n})\mid \theta_i\in \mathbb{R},\sum_i \theta_i =0 \right\}.$$ Any element in $SU(n)$ is conjugate to some element in $T$ and $T$ is called maximal torus in $SU(n)$.
The Lie subalgebra of the Lie algebra $\mathfrak{su}(n)$ corresponding to $T$ is the Cartan subalgebra 
$$\mathfrak{h}:=\left \{{\rm diag}(i\theta_1,...,i\theta_n)\mid \theta_i \in \mathbb{R} ,\sum_i \theta_i =0  \right\}.$$
We denote by $\exp \colon \mathfrak{su}(n) \to SU(n)$ the exponential map and note that its restriction to $\mathfrak{h}$ is a homomorphism $\exp \colon \mathfrak{h} \to T$.
We denote by $N(T)$ the normalizer of $T$ in $SU(n)$.
The Weyl group of $SU(n)$ is $$W(T,SU(n))=N(T)/T \simeq S_n$$ and it acts on $\mathfrak{h}$ by permutation of the coordinates. The linearization of this action yields a homomorphism ${\rm Ad} \colon {\mathbb R}[S_n] \to {\rm End}_{\mathbb R}(\mathfrak{h})$, which will play an important role in our study.

A basic property of word maps $\omega \colon SU(n) \times SU(n) \to SU(n)$ is the identity $$\omega(zuz^{*},zvz^*) = z w(u,v)z^*.$$ Hence, in order to show surjectivity, it is enough to show that $T \subset SU(n)$ lies in the image of $\omega$. We will frequently make use of this fact. For more details about compact Lie groups, see ~\cite{Mark}.

\subsection{Review of known results}

Let us now start with an easy observation. As mentioned, it is easy to see that the word maps are surjective for $\omega \not \in \Fb^{(1)}$.

\begin{lem} \label{notzero} Let $\Fb$ be the free group on two generators, $\omega \in \Fb \setminus \Fb^{(1)}$, and $n \in \Nb$. Then, 
the word map $\omega : SU(n)\times SU(n)\rightarrow SU(n)$ is surjective. 
\end{lem}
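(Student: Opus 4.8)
The plan is to reduce the statement to a question about a single element, using the conjugation-equivariance of word maps together with the fact that every matrix in $SU(n)$ is conjugate into the maximal torus $T$. So it suffices to show that an arbitrary $t = {\rm diag}(e^{i\theta_1},\dots,e^{i\theta_n})$ with $\sum_j \theta_j = 0$ lies in the image of $\omega$. Since $\omega \notin \Fb^{(1)}$, the image of $\omega$ in the abelianization $\Fb/\Fb^{(1)} \cong \Zb^2$ is a nonzero pair $(p,q)$; writing $\omega(a,b) = a^p b^q \cdot c(a,b)$ with $c \in \Fb^{(1)}$, we want to choose $u,v \in SU(n)$ so that the commutator part collapses and only the abelian part survives.

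The clean way to do this is to evaluate $\omega$ on two commuting elements of $T$: if $u,v \in T$ then $\omega(u,v) = u^p v^q$ because all commutators vanish. Thus it is enough to show that the map $T \times T \to T$, $(u,v) \mapsto u^p v^q$, is surjective. Assume without loss of generality $p \neq 0$ (otherwise swap the roles of $a$ and $b$). On the Cartan subalgebra $\mathfrak{h}$ the exponential $\exp\colon \mathfrak{h} \to T$ is a surjective homomorphism, and the map $\mathfrak{h} \times \mathfrak{h} \to \mathfrak{h}$, $(H,K) \mapsto pH + qK$, is already surjective (indeed, fixing $K = 0$, the map $H \mapsto pH$ is a bijection of the real vector space $\mathfrak{h}$ since $p \neq 0$). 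Composing with $\exp$ and using that $\exp$ is onto $T$, we conclude that $(u,v) \mapsto u^p v^q$ is surjective onto $T$.

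Putting the pieces together: given an arbitrary $g \in SU(n)$, pick $z \in SU(n)$ with $z^* g z = t \in T$, then pick $u, v \in T$ with $u^p v^q = t$, and observe $\omega(zuz^*, zvz^*) = z\,\omega(u,v)\,z^* = z t z^* = g$. Hence $\omega$ is surjective. There is no real obstacle here; the only point requiring a word of care is the reduction $\omega(u,v) = u^p v^q$ for commuting $u,v$, which follows because every element of $\Fb^{(1)}$ maps to $1_n$ under any homomorphism $\Fb \to SU(n)$ that factors through the abelian group generated by two commuting elements, equivalently because $c(a,b) \in \Fb^{(1)}$ and $\Fb^{(1)}$ is the kernel of $\Fb \to \Fb/\Fb^{(1)}$.
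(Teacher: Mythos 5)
Your proof is correct and follows essentially the same route as the paper: both reduce to hitting the maximal torus $T$ by conjugation-equivariance, use that $\omega \notin \Fb^{(1)}$ gives a nonzero exponent pair $(p,q)$ in the abelianization, plug in commuting elements (effectively setting one variable to $1_n$), and invoke surjectivity of $\exp\colon\mathfrak{h}\to T$ to extract a $p$-th root. The paper states it slightly more tersely, writing $\omega(h,1_n)=h^k$ with $k=\sum_i n_i$ and $h=\exp(\bar g/k)$, but the underlying argument is identical to yours.
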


\begin{proof} We write
$$\omega (a,b)=a^{n_1}b^{m_1}a^{n_2}b^{m_2}...a^{n_k}b^{m_k}$$ with $n_i,m_i\in \mathbb{Z}$ and note that $\sum_i n_i\ne 0$ or $\sum_i m_i\ne 0$. Without loss of generality $\sum_i n_i = k \neq 0$.
If $g\in T$, then $g=h^k$ for some $h\in T$. Indeed, since the exponential map ${\rm exp} \colon \mathfrak{h} \to T$ is a surjective homomorphism, we can take $\bar g \in \mathfrak{h}$ to be some preimage of $g$ and set $h := \exp(\bar g/k)$. Then $\omega(h,1_n)=h^k=g$. We conclude that $\omega$ is a surjective map.
\end{proof}

We will now explain the Got\^{o}'s proof of the main result from ~\cite{goto, MR0035283} -- in the case of $G=SU(n)$.

\begin{thm}[Got\^{o}, T{\^o}yama] \label{lie} Let $n \in \Nb$. The word map $\omega : SU(n)\times SU(n)\rightarrow SU(n)$ with $\omega (a,b)=[a,b]$ is surjective.
\end{thm}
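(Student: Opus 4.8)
The plan is to reduce the problem to diagonal matrices using the conjugation-equivariance of the word map: since every element of $SU(n)$ is conjugate into the maximal torus $T$ and $[zuz^*,zvz^*]=z[u,v]z^*$, it suffices to produce, for each $g\in T$, a pair $u,v\in SU(n)$ with $[u,v]=g$. Fix $g = {\rm diag}(e^{i\theta_1},\dots,e^{i\theta_n}) \in T$. The idea of Gôtô's argument is to look for $u$ and $v$ where $v$ is a permutation-type element normalizing $T$ — concretely, take $v\in N(T)$ to be (a unitary scalar multiple of) the cyclic permutation matrix realizing the $n$-cycle $\sigma=(1\,2\,\cdots\,n)$, and take $u = {\rm diag}(e^{i\phi_1},\dots,e^{i\phi_n})\in T$. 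Then $vuv^{*} = {\rm diag}(e^{i\phi_{\sigma^{-1}(1)}},\dots,e^{i\phi_{\sigma^{-1}(n)}})$, so
\[
[u,v] = u\,(v u^{-1} v^{*}) = {\rm diag}\bigl(e^{i(\phi_1-\phi_{\sigma^{-1}(1)})},\dots,e^{i(\phi_n-\phi_{\sigma^{-1}(n)})}\bigr).
\]
Thus $[u,v]=g$ amounts to solving the linear system $\phi_j-\phi_{\sigma^{-1}(j)} \equiv \theta_j \pmod{2\pi}$ for $j=1,\dots,n$.

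Next I would analyze this system. Because $\sigma$ is an $n$-cycle, the operator $\phi\mapsto \phi-\sigma^{-1}\!\cdot\phi$ on $\mathbb R^n$ has kernel exactly the constant vectors, and its image is precisely the hyperplane $\{x\in\mathbb R^n : \sum_j x_j = 0\}$; equivalently, in the language of the excerpt, ${\rm Ad}(1-\sigma^{-1})$ maps $\mathfrak h$ onto $\mathfrak h$. The vector $(\theta_1,\dots,\theta_n)$ chosen as a preimage of $g$ under $\exp\colon\mathfrak h\to T$ already satisfies $\sum_j\theta_j=0$, so it lies in the image and the system is solvable over $\mathbb R$ — no congruence fudging is even needed. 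This produces a diagonal $u$ with $[u,v]=g$. One must also check that $v$ can be chosen in $SU(n)$: the cyclic permutation matrix has determinant $(-1)^{n-1}$, so we rescale it by an appropriate $n$-th root of $\pm1$ to land in $SU(n)$; rescaling $v$ by a central scalar does not change the conjugation action $u\mapsto vuv^*$, hence does not affect the computation of $[u,v]$. Finally, $u\in T\subset SU(n)$ automatically.

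Assembling: given arbitrary $g\in SU(n)$, write $g = z g' z^*$ with $g'\in T$, solve $[u',v']=g'$ as above with $u',v'\in SU(n)$, and set $u=zu'z^*$, $v=zv'z^*$; then $[u,v]=g$. This shows $\omega(a,b)=[a,b]$ is surjective on $SU(n)$.

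The only real subtlety — the step I expect to need the most care — is the surjectivity of the map ${\rm Ad}(1-\sigma^{-1})$ on $\mathfrak h$, i.e. that the trace-zero constraint is the \emph{exact} obstruction; this is a clean linear-algebra fact about the circulant matrix $I-P_\sigma$ (its eigenvalues are $1-\zeta^k$ for $\zeta$ a primitive $n$-th root of unity, which vanish only at $k=0$), but it is the conceptual heart of the argument and the part that will later need to be generalized, so it deserves to be stated carefully rather than waved through. Everything else is the equivariance trick plus bookkeeping to stay inside $SU(n)$.
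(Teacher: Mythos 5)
Your proposal is correct and takes essentially the same route as the paper: reduce to the torus by equivariance, take the long cycle $\sigma=(1\,2\cdots n)$, and exploit the fact that $\mathrm{Ad}(\sigma-1)$ (equivalently $1-\sigma^{-1}$, which differs by the unit $-\sigma^{-1}$) is invertible on $\mathfrak h$ because the eigenvalues $1-\zeta^k$ are nonzero for $k\ne 0$, with the determinant obstruction cured by a central rescaling of the permutation matrix. The paper phrases the linear step via the exponential map as $g=\exp(\mathrm{Ad}(\sigma-1)\bar h)=[\sigma,h]$ rather than writing out the linear system in the $\phi_j$'s, but this is a purely notational difference.
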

\begin{proof} For the permutation $\sigma=(1,2, \dots, n)\in S_n$, it is easy to see that ${\rm Ad} (\sigma-1)$ is a vector space automorphism of $\mathfrak{h}$. Indeed, it is well-known that the eigenvalues of ${\rm Ad}(\sigma)$ acting on $\mathfrak{h}$ are $\{ \exp(2\pi i l/n) \mid 1 \leq l \leq n-1 \}$.

Now, let $g \in T$ be arbitrary. Since the exponential map $\exp:\mathfrak{h} \rightarrow T$ is surjective, there exists $\bar g\in \mathfrak{h}$ such that $g=\exp(\bar g)$. Since ${\rm Ad} (\sigma-1)$ is automorphism of $\mathfrak{h}$, there is $ \bar h\in \mathfrak{h}$ such that $\bar g= {\rm Ad}(\sigma-1)(h)$. Then, setting $h := \exp(\bar h)$ we get:
$$g= \exp \bar g= \exp({\rm Ad} (\sigma-1)(h)) =\exp( {\rm Ad}(\sigma)h) \exp(-h)= \sigma h \sigma^{-1} h^{-1}=[\sigma,
h].$$ Now, the permutation matrix $\sigma$ might not be in $SU(n)$, however if $\det(\sigma)=-1$, then we just replace $\sigma$ by $\exp(\pi i/n)\sigma \in SU(n)$. This proves the claim.
\end{proof}

\begin{remark}
Note that Got\^{o}'s proof shows the stronger statement that there exists a conjugacy class $C \subset SU(n)$ such that $C^2=SU(n)$. Indeed, for odd $n \in \Nb$ just take $C$ to be the conjugacy class of $\sigma$ and note that $\sigma^{-1} \in C$; similarly for $\exp(\pi i/n)\sigma$ if $n$ is even. In the world of non-abelian finite simple groups, this is known as Thompson's conjecture.
\end{remark}

The idea in Got\^{o}'s proof depends on finding a suitable Laurent polynomial $p(t) \in \mathbb{Z}[t,t^{-1}]$ and a suitable element $\sigma \in W(T)$ in the Weyl group of the maximal torus such that ${\rm Ad}( p(\sigma))$ is a vector space automorphism of the Cartan subalgebra. In the case of the commutator word $\omega=[a,b]$, we take $\sigma=(1,2, \dots ,n)\in S_n$ and $p(t) = t-1$. Our goal is to extend the method to cover more elements in $\Fb$.

\section{The main result} \label{mainsec}

In this section, we want to associate to $\omega \in \Fb^{(1)}$ a polynomial $p_{\omega}$ which can be used in an argument analogous to the one in Got\^{o}'s proof. We define a homomorphism $p_{\omega} \colon \Fb^{(1)} \to \Zb[t,t^{-1}]$
by setting
$$p_{[a^n,b^m]}(t) = m (t^n -1), \quad \forall n,m \in \Zb, nm \neq 0.$$
Note that this is well-defined since $\{ [a^n,b^m] \mid n,m \in \Zb, nm \neq 0\}$ generates $\Fb^{(1)}$ freely, see Section \ref{introgroups}. Since $\Zb[t,t^{-1}]$ abelian, $p_{\omega} = 0$ for all $\omega \in \Fb^{(2)}$.

\begin{lem} \label{large}
Let $\Fb$ be the free group on two generators and let $\omega \in  \Fb^{(1)}$. 
If $p_{\omega}(\exp(2\pi l i/n)) \neq 0$ for $1 \leq l \leq n-1$, then the word map $\omega \colon SU(n)\times SU(n)\rightarrow SU(n)$ is surjective.
\end{lem}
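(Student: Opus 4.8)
The plan is to run Got\^{o}'s argument from Theorem~\ref{lie} essentially verbatim, with the $n$-cycle $\sigma = (1,2,\dots,n) \in S_n$ playing the role of the permutation and the Laurent polynomial $p_\omega$ playing the role of $t-1$. The computational heart is the identity
$$\omega(\sigma,h) = \exp\bigl(\mathrm{Ad}(p_\omega(\sigma))\,\bar h\bigr), \qquad h := \exp(\bar h),\ \bar h \in \mathfrak h,$$
where $p_\omega(\sigma) \in \Zb[S_n]$ is obtained by substituting $\sigma$ for $t$ in $p_\omega(t)$, which is legitimate since $\sigma$ is invertible. To establish it, I would expand $\omega$ as an ordered product of the free generators $[a^{n_i},b^{m_i}]^{\pm 1}$ of $\Fb^{(1)}$ from Section~\ref{introgroups}; evaluating at $a=\sigma$, $b=h$, each factor becomes $[\sigma^{n_i},h^{m_i}]^{\pm 1}$, and the one-line computation of Theorem~\ref{lie} (using $\sigma h^{m}\sigma^{-1} = \exp(m\,\mathrm{Ad}(\sigma)\bar h) \in T$) gives $[\sigma^{n_i},h^{m_i}]^{\pm 1} = \exp\bigl(\mathrm{Ad}(\pm m_i(\sigma^{n_i}-1))\,\bar h\bigr)$. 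Since $\mathfrak h$ is abelian and $\exp|_{\mathfrak h}$ is a homomorphism, the ordered product collapses to $\exp$ of the sum of the exponents, and by the defining formula $p_{[a^n,b^m]}(t)=m(t^n-1)$ together with the additivity of $\omega \mapsto p_\omega$, that sum is exactly $\mathrm{Ad}(p_\omega(\sigma))\bar h$. One small point: a permutation matrix need not have determinant $1$, but since $\omega \in \Fb^{(1)}$ the exponent sum of $a$ in $\omega$ vanishes, so $\omega(\cdot,h)$ is unchanged under multiplying its first argument by a central scalar; hence we may replace $\sigma$ by $\exp(\pi i/n)\sigma \in SU(n)$ when $\det\sigma = -1$ without affecting anything.

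Next I would verify that $\mathrm{Ad}(p_\omega(\sigma))$ is a vector-space automorphism of $\mathfrak h$. As recalled in the proof of Theorem~\ref{lie}, $\mathrm{Ad}(\sigma)$ is diagonalizable on $\mathfrak h$ with eigenvalues the non-trivial $n$-th roots of unity $\exp(2\pi i l/n)$, $1 \le l \le n-1$ (the eigenvalue $1$ is excluded because its eigenline is spanned by the all-ones vector, which is not trace-free). Because $\mathrm{Ad}$ is an algebra homomorphism on $\mathbb R[S_n]$, we get $\mathrm{Ad}(p_\omega(\sigma)) = p_\omega(\mathrm{Ad}(\sigma))$, so this operator is diagonalizable over $\mathbb C$ with eigenvalues $p_\omega(\exp(2\pi i l/n))$ for $1 \le l \le n-1$. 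By hypothesis these are all non-zero, hence $\mathrm{Ad}(p_\omega(\sigma))$ is invertible.

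With these two facts the conclusion is immediate and follows Got\^{o} line for line. Given $g \in T$, choose $\bar g \in \mathfrak h$ with $\exp(\bar g)=g$, then $\bar h \in \mathfrak h$ with $\mathrm{Ad}(p_\omega(\sigma))\bar h = \bar g$, and put $h := \exp(\bar h)$; the displayed identity yields $\omega(\sigma,h)=\exp(\bar g)=g$. Hence $T \subseteq \omega(SU(n)\times SU(n))$, and since every element of $SU(n)$ is conjugate into $T$ and word maps satisfy $\omega(zuz^*,zvz^*)=z\,\omega(u,v)z^*$, the word map $\omega$ is surjective. I expect the only genuine work to be the bookkeeping in the first step: writing $\omega$ in terms of the free basis of $\Fb^{(1)}$, tracking the signs coming from inverse generators, and confirming that the exponents assemble, via the homomorphism $p_\omega$, into $\mathrm{Ad}(p_\omega(\sigma))\bar h$; everything after that is a transcription of Theorem~\ref{lie}.
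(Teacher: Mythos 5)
Your proposal is correct and follows the paper's proof essentially verbatim: both expand $\omega$ in the free basis $\{[a^n,b^m]\}$ of $\Fb^{(1)}$, evaluate at $(\sigma,h)$ to reduce each factor to $\exp(\mathrm{Ad}(\varepsilon_i m_i(\sigma^{n_i}-1))\bar h)$, collapse the commuting product to $\exp(\mathrm{Ad}(p_\omega(\sigma))\bar h)$, invoke the eigenvalue hypothesis for invertibility of $\mathrm{Ad}(p_\omega(\sigma))$, and patch the determinant with the scalar $\exp(\pi i/n)$. Your added remarks (why the eigenvalue $1$ is absent on $\mathfrak h$, and why the central scalar is harmless because $\omega\in\Fb^{(1)}$) are correct clarifications of points the paper leaves implicit, but do not constitute a different route.
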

\begin{proof}
We write
$$\omega = [a^{n_1},b^{m_1}]^{\varepsilon_1} \cdots [a^{n_k},b^{m_k}]^{\varepsilon_k}$$
with $n_i,m_i \in \Zb$ and $\varepsilon_i \in \{\pm 1\}$. Then,
$$p_{\omega}(t) = \sum_{i=1}^k \varepsilon_i m_i (t^{n_i}-1).$$ Let $g \in T$ be arbitrary and let $\bar g \in \mathfrak{h}$ be such that $\exp(\bar g)=g$. Let $\sigma = (1,2,\cdots,n) \in W(T)$. By assumption ${\rm Ad}(p_{\omega}(\sigma))$ is invertible in ${\rm End}_{\mathbb R}(\mathfrak{h})$. Let $\bar h \in \mathfrak{h}$ be such that ${\rm Ad}(p_{\omega}(\sigma))(\bar h))= \bar g$ and set $h:= \exp(\bar h)$. We claim that $\omega(\sigma,h)=g$. Indeed,
\begin{eqnarray*}
\omega(\sigma,h) &=& [\sigma^{n_1},h^{m_1}]^{\varepsilon_1} \cdots [\sigma^{n_k},h^{m_k}]^{\varepsilon_k} \\
&=& \prod_{i=1}^k \exp({\rm Ad}(\sigma^{n_i})( \varepsilon_im_i \bar h)) \exp( - \varepsilon_im_i \bar h)\\
&=& \exp({\rm Ad}(p_{\omega}(\sigma)(\bar h))) \\
&=& g.
\end{eqnarray*}
If $n$ is even, then we must replace $\sigma$ by $\exp(\pi i/n) \sigma \in SU(n)$.
This proves the claim.
\end{proof}

\begin{cor} \label{corlem}
Let $\Fb$ be the free group on two generators and let $\omega \in  \Fb^{(1)}$. 
If $p_{\omega} \neq 0$, then there exists an integer $k \in \Nb$, such that for all $n \in \Nb$ with ${\rm lpf}(n) \geq k$, the word map $\omega\colon SU(n)\times SU(n)\rightarrow SU(n)$ is surjective.
\end{cor}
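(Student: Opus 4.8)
The plan is to reduce to Lemma \ref{large} by an elementary argument about the zeros of $p_\omega$ at roots of unity. By Lemma \ref{large} it suffices to produce $k \in \Nb$ such that whenever ${\rm lpf}(n) \geq k$ one has $p_\omega(\exp(2\pi l i/n)) \neq 0$ for every $1 \leq l \leq n-1$. Since $p_\omega \in \Zb[t,t^{-1}]$ is nonzero, I would first write $p_\omega(t) = t^{-s} q(t)$ with $q \in \Zb[t]$, $q \neq 0$ and $q(0) \neq 0$; as every root of unity is invertible, $p_\omega(\zeta) = 0$ is equivalent to $q(\zeta) = 0$.

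Next I would bound the order of any root of unity which is a zero of $q$. If $\zeta$ has exact order $d$ and $q(\zeta) = 0$, then the $d$-th cyclotomic polynomial $\Phi_d$ --- the monic integral minimal polynomial of $\zeta$ over $\mathbb{Q}$ --- divides $q$, so $\varphi(d) = \deg \Phi_d \leq \deg q$. Since $\varphi(d) \to \infty$ as $d \to \infty$, only finitely many $d \in \Nb$ satisfy $\varphi(d) \leq \deg q$; I would let $k \in \Nb$ be an integer with $k \geq 2$ that exceeds all of them.

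It remains to verify that this $k$ works. Assume ${\rm lpf}(n) \geq k$ and fix $l$ with $1 \leq l \leq n-1$. Then $\zeta := \exp(2\pi l i/n)$ is a root of unity of exact order $d := n/\gcd(l,n)$, and $d > 1$ because $n \nmid l$. As $d$ is a divisor of $n$ with $d > 1$, every prime dividing $d$ divides $n$, hence $d \geq {\rm lpf}(n) \geq k$; by the choice of $k$ this forces $\varphi(d) > \deg q$, so $\zeta$ is not a zero of $q$, and thus $p_\omega(\zeta) \neq 0$. Since $l$ was arbitrary, Lemma \ref{large} yields surjectivity of $\omega \colon SU(n) \times SU(n) \to SU(n)$.

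I do not expect a genuine obstacle in this corollary; the only point requiring attention is that $\exp(2\pi l i/n)$ must be analyzed via its exact order $d$ --- a divisor of $n$ that is only guaranteed to be $> 1$, not close to $n$ --- and that the hypothesis ${\rm lpf}(n) \geq k$ is exactly what forces every such $d$ past the finite list of ``bad'' orders permitted by $\deg p_\omega$.
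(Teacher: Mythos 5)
Your proof is correct and follows essentially the same route as the paper: reduce to Lemma \ref{large}, then bound the order of any root of unity killing $p_\omega$ by comparing $\varphi(\text{order})$ to the degree span of $p_\omega$, and observe that ${\rm lpf}(n)$ large forces that order out of range. The only cosmetic difference is that the paper extracts the explicit bound ${\rm lpf}(n)-1\leq\varphi(m)$ from $(p-1)\mid\varphi(m)$ for a prime $p\mid m\mid n$, giving $k$ concretely as $\max S-\min S$, while you use $d\geq{\rm lpf}(n)$ together with $\varphi(d)\to\infty$, which gives a non-explicit but equally valid $k$.
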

\begin{proof}
Assume that $p_{\omega}(t) = \sum_{i \in \Zb} a_i t^i\neq 0$. Let $S:= \{i \in \Zb \mid a_i \neq 0\}$ and set $k:= \max{S} - \min{S}$.
Let $n \in \Nb$ and assume that $\xi := \exp(2\pi i l/n)$ for some $1 \leq l \leq n-1$ satisfies $p_{\omega}(\xi)=0$. Let $d$ be the degree of the minimal polynomial $m_{\xi}$ of $\xi$. If $\xi$ is a primitive $m$-th root of unity, then $m| n$ and
$d=\varphi(m)$, where $\varphi$ denotes Euler's $\varphi$-function. For some prime $p$ which divides $n$, we must have $(p-1) | \varphi(m)$ and hence ${\rm lpf}(n)-1 \leq d$.
Since $p_{\omega}(t)$ has rational coefficients, we also get that $m_{\xi} | p_{\omega}$ in the ring ${\mathbb Q}[t,t^{-1}]$ and hence $d \leq k$. Hence, if the assumption of Lemma \ref{large} fails then ${\rm lpf}(n) -1 \leq k$. This proves the claim.
\end{proof}

Let us discuss some examples to see how the previous results can be applied and what their limitations are.

\begin{example} \label{example1} The word map $\omega : SU(n)\times SU(n)\rightarrow SU(n)$  $\omega (a,b)=[a,b]^2$ is surjective for all $n \in \Nb$. Indeed, $p_{\omega}(t)=2(t-1)$ and ${\rm Ad}(p_{\omega}(\sigma))$ is a vector space automorphism of $\mathfrak{h}$ for $\sigma = (1,2,\dots,n)$.
\end{example}

\begin{example}\label{example2} The word map $\omega : SU(n)\times SU(n)\rightarrow SU(n)$ for $\omega (a,b)=a^2ba^{-1}ba^{-1}b^{-2}$ is surjective for all $n \in \Nb$. We have $p_{\omega}(t)=t^2+t-2$ and ${\rm Ad}(p_{\omega}(\sigma))$ is a vector space automorphism of $\mathfrak{h}$ for $\sigma = (1,2,\dots,n)$.
\end{example}

It is easy to see that $p_{\omega}$ vanishes for $\omega(a,b) = [a,b][a,b^{-1}]$ even though $\omega \not \in \Fb^{(2)}$. In this case we can still apply the method since we may interchange the role of $a$ and $b$ and note that $p_{\omega'} \neq 0$ for $\omega(a,b)=[b,a][b,a^{-1}]$. However, for $\omega(a,b) = [a,b][a,b^{-1}][a^{-1},b][a^{-1},b^{-1}]$ no such trick helps and we have to consider more complicated Nielsen transformations and their effect on our polynomial. We will show that for each $\omega \not \in \Fb^{(2)}$, there exists a basis for $\Fb$, such that with respect to the new basis, $p_{\omega} \neq 0$.  Any base change is induced by a sequence of Nielsen transformations. In Proposition \ref{ab} we study in detail how the base change $a \mapsto ab, b \mapsto b$ can be expressed in the natural basis of $\Fb^{(1)}$.

For $x,y \in G$, we use the notation $^{y}x := yxy^{-1}$. Note that this convention implies ${}^z({}^y x) = {}^{zy}x$ and ${}^z(xy) = {}^zx {}^zy$ as expected. It is well-known that for $x,y,z \in G$ we get:
\begin{equation} \label{deriv}
[x,yz]=[x,y]\cdot {}^y  [x,z] \quad \text{and} \quad [xy,z]={}^x [y,z]\cdot [x,z]. 
\end{equation}

From now on let us write $c:=ab$. Note that the set $\{c,b\}$ is a basis for $\Fb$. Our next goal is to express $[a^n,b^m]$ in terms of the commutators $[c^n,b^m]$, i.e. we want to determine the effect of the base change on the natural basis for $\Fb^{(1)}$. We will need the following lemma.

\begin{lem} Let $G$ be a group, $a,b \in G$ and $c:=ab$. Let $n,m \in \Zb$. Then, the following identities hold:
\begin{equation} \label{eq1}
 {}^a [c^n,b^{m}]=[c,b^{-1}][b^{-1},c^{n+1}][c^{n+1},b^{m-1}][b^{m-1},c].
\end{equation} 
and
\begin{equation} \label{eq1b}
 {}^{a^{-1}} [c^n,b^{m}]=[b,c^{n-1}] [c^{n-1},b^{m+1}] [b^{m+1},c^{-1}] [c^{-1},b].
\end{equation} 
\end{lem}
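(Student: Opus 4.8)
The plan is to verify both identities by direct computation using the two commutator expansion rules in \eqref{deriv}, together with the substitution $a = cb^{-1}$ (equivalently $c = ab$). The two formulas are mirror images of one another under the symmetry $n \leftrightarrow -n$, $m \leftrightarrow -m$ combined with inverting, so it suffices to work out \eqref{eq1} carefully and then obtain \eqref{eq1b} either by the analogous computation or by applying \eqref{eq1} to suitable exponents and taking inverses; I expect the cleanest writeup is to just do both in parallel since the manipulations are short.

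For \eqref{eq1}, I would start from the left-hand side ${}^a[c^n,b^m]$ and rewrite $a$ in terms of $c$ and $b$. Since $c = ab$ we have $a = cb^{-1}$, so ${}^a[c^n,b^m] = {}^{cb^{-1}}[c^n,b^m] = {}^c\bigl({}^{b^{-1}}[c^n,b^m]\bigr)$. The inner term ${}^{b^{-1}}[c^n,b^m] = b^{-1}c^n b^m c^{-n} b^{-m} b = b^{-1} c^n b^{m+1} c^{-n} b^{-m-1}$, which one recognizes as a conjugate/rearrangement that can be folded into commutators of the form $[b^{\pm 1}, c^n]$ and $[c^n, b^{m+1}]$. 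Then conjugating by $c$ and using ${}^z(xy) = {}^zx\,{}^zy$ turns conjugation by $c$ into a shift that produces the exponents $n+1$ and $m-1$ appearing on the right-hand side, while the "boundary" commutators $[c,b^{-1}]$ and $[b^{m-1},c]$ absorb the leftover factors. Concretely I would aim to show both sides, when fully expanded as words in $b$ and $c$, reduce to the same reduced word; since $\{c,b\}$ is a basis for $\Fb$, it is enough to check the identity in $\Fb$ itself, and then it holds in every group $G$.

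For \eqref{eq1b}, the same strategy applies with $a^{-1} = bc^{-1}$, so ${}^{a^{-1}}[c^n,b^m] = {}^{b}\bigl({}^{c^{-1}}[c^n,b^m]\bigr)$, and conjugation by $c^{-1}$ now shifts the exponent $n$ down to $n-1$ while conjugation by $b$ raises $m$ to $m+1$, matching the right-hand side, with $[b,c^{n-1}]$ and $[c^{-1},b]$ playing the role of the boundary terms. The main obstacle is purely bookkeeping: keeping track of the order of the factors (the products are genuinely non-commutative, per the convention fixed in Section \ref{introgroups}) and making sure the conjugating elements are distributed correctly across the four-commutator product. There is no conceptual difficulty — the identity \eqref{deriv} plus associativity does everything — but the cancellation of intermediate $b^{\pm1}$ and $c^{\pm1}$ factors has to be done with care, and I would double-check by specializing to small values such as $n = m = 1$ and comparing reduced words on both sides.
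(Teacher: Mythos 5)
Your plan is essentially the paper's proof: substitute $a=cb^{-1}$, expand ${}^a[c^n,b^m]$ as a word in $c,b$, and reorganize it by inserting cancelling pairs to read off the four commutators — a free-group identity that then holds in every group. One caution: the intermediate equation in your sketch, ${}^{b^{-1}}[c^n,b^m]=b^{-1}c^nb^{m+1}c^{-n}b^{-m-1}$, is wrong as written (you cannot move $b$ past $c^{-n}$); the correct reduction is ${}^{b^{-1}}[c^n,b^m]=b^{-1}c^nb^mc^{-n}b^{1-m}=[b^{-1},c^n][c^n,b^{m-1}]$, after which conjugating by $c$ does give $[c,b^{-1}][b^{-1},c^{n+1}][c^{n+1},b^{m-1}][b^{m-1},c]$, so your two-stage ${}^c\circ{}^{b^{-1}}$ organization works once the bookkeeping is repaired.
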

\begin{proof}
In order to prove \eqref{eq1}, we compute
\begin{eqnarray*}
{}^a [c^n,b^{m}] 
&=&cb^{-1}[c^n,b^{m}]bc^{-1} \\ 
&=& cb^{-1} c^nb^{m} c^{-n} b^{-m} b c^{-1} \\
&=& cb^{-1} c^{-1} b b^{-1} c^{n+1} b c^{-(n+1)} c^{n+1} b^{m-1} c^{-(n+1)} b^{-(m-1)} b^{m-1} c b^{-(m-1)} c^{-1} \\
&=& [c,b^{-1}][b^{-1},c^{n+1}][c^{n+1},b^{m-1}][b^{m-1},c].
\end{eqnarray*}
For \eqref{eq1b} we compute
\begin{eqnarray*} 
{}^{a^{-1}} [c^n,b^{m}]
&=& bc^{-1} c^n b^m c^{-n} b^{-m} cb^{-1}\\
&=& bc^{n-1}b^{-1} c^{-(n-1)} \cdot c^{n-1} b^{m+1} c^{-(n-1)} b^{-(m+1)} \cdot  b^{m+1}c^{-1} b^{-(m+1)} c \cdot  c^{-1} b c b^{-1} \\
&=& [b,c^{n-1}] [c^{n-1},b^{m+1}] [b^{m+1},c^{-1}] [c^{-1},b]
\end{eqnarray*}

This finishes the proof.
\end{proof}

Taking the inverse of Equation \eqref{eq1} we obtain for all $n,m \in \Zb$:
\begin{equation} \label{eq2}
 {}^a [b^m,c^n]= [c,b^{m-1}] [b^{m-1},c^{n+1}] [c^{n+1}, b^{-1}] [b^{-1},c].
\end{equation} 
and
\begin{equation} \label{eq2b}
{}^{a^{-1}} [b^m,c^n] = [b,c^{-1}][c^{-1},b^{m+1}][b^{m+1},c^{n-1}][c^{n-1},b].
\end{equation}

We are now ready to state and prove the technical heart of our computations.

\begin{proposition} \label{ab}
Let $G$ be a group, $a,b \in G$, $c:=ab$ and let $m  \in  \Zb$. If Then the following equations holds:
\begin{equation} \label{eq4}
 [a^n,b^m]= \prod_{i=1}^{n-1} [c^i,b^{-i}] [b^{-i}, c^{i+1}] \cdot \prod_{i=1}^{n} [c^{n+1-i},b^{m-n+i}] [b^{m-n+i}, c^{n-i}], \quad n \geq 1.
\end{equation}
\begin{equation} \label{eq5}
[a^{-n},b^m] = \prod_{i=1}^{n} [c^{1-i},b^i][b^i,c^{-i}] \cdot \prod_{i=1}^{n}  [c^{-(n+1)+i},b^{n+m+1-i}][b^{n+m+1-i},c^{-n+i}],\quad n \geq 1. 
\end{equation}
\end{proposition}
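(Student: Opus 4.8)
The plan is to prove both identities \eqref{eq4} and \eqref{eq5} by induction on $n$, using the single-step conjugation formulas \eqref{eq1}, \eqref{eq1b}, \eqref{eq2}, \eqref{eq2b} as the engine. First I would set up the induction for \eqref{eq4}. The base case $n=1$ reads $[a,b^m] = [c,b^{m-1}][b^{m-1},c^0]\cdots$; after simplifying (noting $[b^{m-1},c^0]=e$ and $[c^0,b^m]=e$), it should reduce to an instance of \eqref{eq1} or \eqref{eq2} with $n=0$, and I would check it comes out to $[c,b^{-1}][b^{-1},c][c,b^m][b^m,e]$ type expression — in fact directly from the definition $[ab,b^m]={}^a[b,b^m]\cdot[a,b^m]={}^a(e)\cdot[a,b^m]$, so one really needs the telescoping to bottom out correctly, which I would verify by hand.

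For the inductive step on \eqref{eq4}, I would write $[a^{n+1},b^m] = [a\cdot a^n, b^m] = {}^a[a^n,b^m]\cdot[a,b^m]$ using the second identity in \eqref{deriv}. Then I apply the induction hypothesis to the $[a^n,b^m]$ inside, and apply the conjugation-by-$a$ formulas \eqref{eq1} and \eqref{eq2} termwise to each factor ${}^a[c^i,b^{-i}]$, ${}^a[b^{-i},c^{i+1}]$, ${}^a[c^{n+1-i},b^{m-n+i}]$, ${}^a[b^{m-n+i},c^{n-i}]$. Each such term expands into a product of four commutators in $c$ and $b$; the key observation driving the proof is that consecutive blocks will telescope — the last commutator produced by expanding one factor cancels (or merges with) the first commutator produced by expanding the next factor — leaving exactly the product claimed for $n+1$, with the leftover $[a,b^m]$ at the right absorbing into the tail. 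I would do the same for \eqref{eq5}, now peeling off on the left via $[a^{-(n+1)},b^m] = [a^{-1}\cdot a^{-n},b^m] = {}^{a^{-1}}[a^{-n},b^m]\cdot[a^{-1},b^m]$ and using \eqref{eq1b}, \eqref{eq2b}.

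The main obstacle I anticipate is purely bookkeeping: tracking the index shifts (the exponents $n+1-i$, $m-n+i$, $n-i$, etc.) so that after conjugation the reindexed products line up with the asserted formula for $n+1$, and verifying that every intermediate commutator indeed cancels against its neighbor rather than surviving. This is the kind of telescoping argument where a single off-by-one error invalidates everything, so I would carry out the $n=1\to 2$ step explicitly in full detail as a sanity check before trusting the general pattern, and then present the general step with the cancellations made visible by grouping the expanded factors appropriately. Since the identities are stated for a general group $G$ with no hypotheses other than $c=ab$ (the stray "If Then" in the statement should be deleted), no Lie theory enters here — it is an identity in the free group, hence in every group.
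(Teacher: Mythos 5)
Your proposal is correct and follows essentially the same route as the paper's proof: induction on $n$, peeling off $a$ (resp.\ $a^{-1}$) on the left via the identity $[xy,z] = {}^x[y,z]\cdot[x,z]$ from \eqref{deriv}, applying the conjugation formulas \eqref{eq1}, \eqref{eq2} (resp.\ \eqref{eq1b}, \eqref{eq2b}) termwise to the inductive hypothesis, and telescoping. The only remarks worth making are that for $n=1$ the first product in \eqref{eq4} is empty and the second reduces to $[c,b^m]$ (not $[c,b^{m-1}]$), and that the cancellation across adjacent $i$-blocks is a four-factor telescope (the last two factors of block $i$ annihilate the first two of block $i+1$) rather than a single-commutator cancellation — both of which you flagged as bookkeeping to be verified, and both of which the paper's computation handles exactly as you anticipate.
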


The main feature of the formulas above is the following. For $n \geq 1$, the powers of $b$ that appear expressing $[a^n,b^m]$ in the new basis will all be less or equal $\max\{-1,m\}$. At the same time, the powers of $a$ range between $1$ and $n$. The powers of $b$ that appear when expressing $[a^{-n},b^m]$ will be less or equal $\max\{n,n+m\}$ and if $m \geq 1$, then $[c^{-n},b^{n+m}][b^{n+m},c^{-n+1}]$ will appear exactly once. We will use this consequence in the proof of our main result.

\begin{proof}[Proof of Proposition \ref{ab}:]
We prove the claim \eqref{eq4} by induction on $n \in \Nb$. The claim is obviously true for $n=1$, since $[a,b^m]=[c,b^m]$.
Let $m \in \Nb$ and assume that the claim \eqref{eq4} is known for the pair $(n-1,m)$. We compute

\begin{eqnarray*}
&&[a^n,b^m] \\ &=& [a a^{n-1},b^m] \\
&\stackrel{\eqref{deriv}}{=}& {}^a[a^{n-1},b^m] [a, b^m]\\
&\stackrel{\eqref{eq4}}=&  \left(\prod_{i=1}^{n-2} {}^a[c^i,b^{-i}] {}^a[b^{-i} ,c^{i+1}] \cdot \prod_{i=1}^{n-1} {}^a[c^{n+1-i},b^{m-n+i}] {}^a[b^{m-n+i}, c^{n-i}] \right) [c, b^m]\\
&\stackrel{\eqref{eq1}+ \eqref{eq2}}{=}&
\prod_{i=1}^{n-2} [c,b^{-1}][b^{-1},c^{i+1}][c^{i+1},b^{-i-1}] [b^{-i-1},c^{i+2}] [c^{i+2}, b^{-1}] [b^{-1},c] \cdot \\
&& \prod_{i=1}^{n-1} [c,b^{-1}][b^{-1},c^{n-i+2}][c^{n-i+2},b^{m-n+i-1}] [b^{m-n+i-1},c^{n-i+1}] [c^{n-i+1}, b^{-1}] [b^{-1},c] \cdot \\
&& [c,b^m]\\
&=&\prod_{i=1}^{n-1} [c^i,b^{-i}] [b^{-i}, c^{i+1}] \cdot \prod_{i=1}^{n} [c^{n+1-i},b^{m-n+i}] [b^{m-n+i}, c^{n-i}].
\end{eqnarray*}

Now, we prove the claim \eqref{eq5} by induction on $n$. Again, the claim is true for $n=1$ since
$$
[a^{-1},b^m] =a^{-1}b^mab^{-m}  =  bc^{-1} b^m cb^{-1}b^{-m}=bc^{-1} b^{-1} c c^{-1} b^{m+1}  cb^{-(m+1)} =[b,c^{-1}][c^{-1},b^{m+1}].
$$
Let us assume that the claim \eqref{eq5} is known for the pair $(n-1,m)$. We compute:
\begin{eqnarray*}
&&[a^{-n},b^m]\\
&=&[a^{-1} a^{-(n-1)},b^m] \\
&\stackrel{\eqref{deriv}}{=}& {}^{a^{-1}}[a^{-(n-1)},b^m] \cdot [a^{-1},b^m]\\
&\stackrel{\eqref{eq5}}{=}& \prod_{i=1}^{n-1} {}^{a^{-1}}[c^{1-i},b^i]{}^{a^{-1}}[b^i,c^{-i}] \cdot \prod_{i=1}^{n-1}  {}^{a^{-1}}[c^{-(n+1)+i},b^{n+m+1-i}]{}^{a^{-1}}[b^{n+m+1-i},c^{-n+i}] \cdot \\
&& [b,c^{-1}][c^{-1},b^{m+1}]\\
&\stackrel{\eqref{eq1b} + \eqref{eq2b}}=&
\prod_{i=1}^{n-1} [b,c^{-i}] [c^{-i},b^{i+1}] [b^{i+1},c^{-1}][c^{-1},b][b,c^{-1}][c^{-1},b^{i+1}][b^{i+1},c^{-i-1}][c^{-i-1},b] \cdot \\
&&\prod_{i=1}^{n-1} [b,c^{-n+i-2}] [c^{-n+i-2},b^{n+m-i+2}][b^{n+m-i+2},c^{-n+i-1}][c^{-n+i-1},b] \cdot \\
&&[b,c^{-1}][c^{-1},b^{m+1}] \\
&=&
\prod_{i=1}^{n-1} [b,c^{-i}] [c^{-i},b^{i+1}][b^{i+1},c^{-i-1}][c^{-i-1},b] \cdot \\
&&\prod_{i=1}^{n-1} [b,c^{-n+i-2}] [c^{-n+i-2},b^{n+m-i+2}][b^{n+m-i+2},c^{-n+i-1}][c^{-n+i-1},b] \cdot \\
&&[b,c^{-1}][c^{-1},b^{m+1}] \\
&=& \prod_{i=1}^{n} [c^{1-i},b^i][b^i,c^{-i}] \cdot \prod_{i=1}^{n}  [c^{-(n+1)+i},b^{n+m+1-i}][b^{n+m+1-i},c^{-n+i}].
\end{eqnarray*}
This proves the claim.
\end{proof}

The following proposition shows that our previous computations are enough to deal with some more complicated words.

\begin{proposition}\label{id} Let $n \in \Nb$.
The word map $$\omega : SU(n)\times SU(n)\rightarrow SU(n)$$ for $\omega (a,b)=[a,b][a,b^{-1}][a^{-1},b][a^{-1},b^{-1}]$ is surjective. 
\end{proposition}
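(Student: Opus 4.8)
The plan is to deduce Proposition \ref{id} from Lemma \ref{large} after a two-step base change of $\Fb$, the first step being exactly the substitution $c=ab$ analysed in Proposition \ref{ab}. One cannot invoke Lemma \ref{large} directly: since $p$ is a homomorphism, in the standard basis $p_\omega(t)=(t-1)-(t-1)+(t^{-1}-1)-(t^{-1}-1)=0$, and interchanging the roles of $a$ and $b$ produces the same cancellation, again $p_\omega=0$. So a genuinely nontrivial base change is required; informally, $p_\omega$ vanishes for both of the obvious choices of generators because the Laurent polynomial attached to $\omega$ still carries a factor $t-1$ after one sets the second variable to $1$, and only a generator whose image mixes $a$ and $b$ (such as $ab$) can remove this.

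First I would put $c:=ab$ and rewrite $\omega$ in the basis $\{c,b\}$. Specialising the identities of Proposition \ref{ab} at $n=1$ gives $[a,b^{m}]=[c,b^{m}]$ and $[a^{-1},b^{m}]=[b,c^{-1}][c^{-1},b^{m+1}]$ for all $m\in\Zb$. Using these with $m=\pm 1$, and noting that $[c^{-1},b^{0}]$ is trivial, one obtains the identity
$$\omega=[c,b]\cdot[c,b^{-1}]\cdot[b,c^{-1}][c^{-1},b^{2}]\cdot[b,c^{-1}]$$
in $\Fb^{(1)}$, expressing $\omega$ as a product of commutators of powers of $c$ and $b$.

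Next I would regard this as an expression of $\omega$ in the basis $\{b,c\}$, now with $b$ as the first and $c$ as the second free generator, and compute the polynomial $p_\omega$ of Section \ref{mainsec} for this ordered basis. Writing $[c,b]=[b,c]^{-1}$, $[c,b^{-1}]=[b^{-1},c]^{-1}$ and $[c^{-1},b^{2}]=[b^{2},c^{-1}]^{-1}$ and applying $p$, the five factors contribute $-(t-1)$, $-(t^{-1}-1)$, $-(t-1)$, $t^{2}-1$ and $-(t-1)$, so that
$$p_\omega(t)=t^{2}-3t+3-t^{-1}=t^{-1}(t-1)^{3}.$$
This Laurent polynomial vanishes at no root of unity other than $1$. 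Since the eigenvalues of ${\rm Ad}(\sigma)$ on $\mathfrak{h}$ for $\sigma=(1,2,\dots,n)$ are exactly the $n$-th roots of unity different from $1$ (as recalled in the proof of Theorem \ref{lie}), the operator ${\rm Ad}(p_\omega(\sigma))$ is a vector space automorphism of $\mathfrak{h}$ for every $n$. Applying Lemma \ref{large} to $\Fb$ with the free generating set $\{b,c\}$ (and replacing $\sigma$ by $\exp(\pi i/n)\sigma\in SU(n)$ when $n$ is even) then puts the maximal torus $T$ into the image of the corresponding word map; since a base change of $\Fb$ does not alter the image of a word map --- the same device already used for $[a,b][a,b^{-1}]$ above --- the original map $\omega\colon SU(n)\times SU(n)\to SU(n)$ is surjective for every $n\in\Nb$.

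The only mildly delicate point is the first step: one must specialise the formulas \eqref{eq4} and \eqref{eq5} of Proposition \ref{ab} correctly at $n=1$ and check the displayed product expansion, in particular that $[a^{-1},b^{-1}]$ collapses to the single commutator $[b,c^{-1}]$. The conceptual heart --- and the reason one base change does not suffice, so that we obtain surjectivity for all $n$ rather than only for $n$ of large least prime factor --- is that the combined move ``substitute $c=ab$, then take $b$ in place of $c$ as the first generator'' is precisely what replaces the identically zero polynomial of the two obvious bases by $t^{-1}(t-1)^{3}$, whose sole root among the roots of unity is $1$.
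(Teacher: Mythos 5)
Your proposal is correct and follows essentially the same route as the paper: substitute $c=ab$, expand $\omega$ in the basis $\{b,c\}$ via Proposition~\ref{ab} (using $[a,b^{\pm 1}]=[c,b^{\pm 1}]$ and $[a^{-1},b^{\pm 1}]=[b,c^{-1}][c^{-1},b^{1\pm1}]$, with $[c^{-1},b^{0}]$ trivial), compute $p_\omega$ in that basis, and apply Lemma~\ref{large}. The one genuine improvement over the paper's write-up is that you factor $p_\omega(t)=t^{2}-3t+3-t^{-1}=t^{-1}(t-1)^{3}$, which makes the assertion that $1$ is the only root of unity annihilating $p_\omega$ transparent, whereas the paper merely states it is "easy to see."
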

\begin{proof} 
Using Proposition $\ref{ab}$ we can write $\omega$ in the following form: $$\omega = [c,b][c,b^{-1}][b,c^{-1}][c^{-1},b^2][b,c^{-1}]$$ where $c=ab$. Indeed, $[a,b]=[c,b], [a,b^{-1}]=[c,b^{-1}], [a^{-1},b^{\pm 1}]=[b,c^{-1}][c^{-1},b^{1 \pm 1}]$ and hence
$\omega = [c,b][c,b^{-1}][b,c^{-1}][c^{-1},b^2][b,c^{-1}]$ as claimed. Now, we may compute $p_{\omega}$ with respect to the basis $\{b,c\}$ and obtain
$$p_{\omega}(t)= - (t-1)-(t^{-1}-1) - (t-1)+(t^2-1) -(t-1) = t^2 -3t-t^{-1}+3 \neq 0.$$
It is easy to see that if a root of unity $\xi$ satisfies $p_{\omega}(\xi)=0$, then $\xi=1$. Hence, Lemma \ref{large} implies that the word map associated with $\omega$ is surjective for all $n \in \Nb$.
\end{proof}

The key observation is that in the expression for $[a^i,b^j]$ in terms of $\{ [c^n,b^m] \mid n,m \in \Zb, nm \neq 0 \}$ can be used to isolate certain exponents. This will be used to show that for $\omega \not \in \Fb^{(2)}$, there is always \emph{some} basis such that $p_{\omega} \neq 0$.

\begin{proposition} \label{new}
Let $\Fb$ be the free group on two generators $\{a,b\}$ and $\omega \in \Fb^{(1)}$. If $\omega \not \in \Fb^{(2)}$, then there exists an basis of $\Fb$ such that $p_{\omega}(t) \neq 0,$ when computed with respect to this basis.
\end{proposition}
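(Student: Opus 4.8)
The plan is to produce, for a given $\omega \in \Fb^{(1)} \setminus \Fb^{(2)}$, an explicit change of basis after which the associated polynomial $p_\omega$ becomes nonzero. The obstruction to $p_\omega \neq 0$ in the standard basis $\{a,b\}$ is that $p_\omega$ may vanish identically even though $\omega$ is a nontrivial element of $\Fb^{(1)}/\Fb^{(2)}$; this is exactly the situation of $\omega = [a,b][a,b^{-1}][a^{-1},b][a^{-1},b^{-1}]$ treated in Proposition \ref{id}. So the real content is to understand the behavior of $p_\omega$ under Nielsen transformations well enough to guarantee that \emph{some} sequence of them makes $p_\omega$ nonzero.

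First I would set up the right invariant. Write $\omega$ in the natural basis as a word in the free generators $\{[a^n,b^m]\}$ of $\Fb^{(1)}$, and record its image in the abelianization $\Fb^{(1)}/\Fb^{(2)}$, which is the free abelian group on the symbols $[a^n,b^m]$, $nm \neq 0$. The map $p_\omega$ factors through this abelianization. The cleanest way to package this: attach to $\omega$ the finite formal sum $\sum c_{n,m}\, [a^n,b^m]$ with $c_{n,m} \in \Zb$ (all but finitely many zero, and none of the $n,m$ equal to zero), and then $p_\omega(t) = \sum c_{n,m}\, m(t^n - 1)$. Thus $p_\omega = 0$ iff for every fixed nonzero $n$, $\sum_m c_{n,m} m = 0$ — i.e. $\omega$ lies in the kernel of this pairing. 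Since $\omega \not\in \Fb^{(2)}$, \emph{some} $c_{n_0,m_0} \neq 0$; if we are unlucky the $n_0$-th slice cancels in the weighted sum, and similarly for every other $n$.

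The key step is then to pick the ``highest'' generator present and use Proposition \ref{ab} to push it to an extremal position that cannot be cancelled. Concretely: among all $[a^n,b^m]$ with $c_{n,m} \neq 0$, let $N$ be the largest value of $|n|$ that occurs (say $n = N > 0$ after possibly swapping $a \leftrightarrow b$ and/or inverting a generator, which are Nielsen moves), and among those let $M$ be the largest value of $m$ with $c_{N,m} \neq 0$. Apply the base change $a \mapsto c = ab$, $b \mapsto b$. By Proposition \ref{ab} and the ``main feature'' remark following it, rewriting $[a^{-N}, b^M]$ (equivalently, after inverting $a$, we arrange the extremal term to be of the form $[a^{-N},b^M]$ with $M \geq 1$) produces the term $[c^{-N}, b^{N+M}][b^{N+M}, c^{-N+1}]$ \emph{exactly once}, and no other generator $[a^i,b^j]$ with $c_{i,j}\neq 0$ can contribute a $c$-power of $b$-exponent as large as $N+M$ with the same $c$-exponent $-N$: the bounds in that remark (powers of $b$ bounded by $\max\{-1,m\}$ for positive $a$-powers, and by $\max\{n,n+m\}$ for negative ones, with $|c$-exponent$|$ governed by $N$) force the pair $(-N, N+M)$ to be reached only by this one term. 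Hence in the new basis $\{c, b\}$ the coefficient $c'_{-N,N+M}$ is nonzero, so $p_\omega$ has a nonzero monomial and $p_\omega \neq 0$.

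The main obstacle I expect is bookkeeping: one has to check carefully that no cancellation occurs at the extremal $(c\text{-exponent},\, b\text{-exponent})$ pair when \emph{all} the generators $[a^i,b^j]$ appearing in $\omega$ are simultaneously rewritten via Proposition \ref{ab}, including the possibility that two different original terms $[a^{i_1},b^{j_1}]$ and $[a^{i_2},b^{j_2}]$ both feed into the same target generator $[c^p,b^q]$. This is why the precise inequalities in the remark after Proposition \ref{ab} — that for $n\geq 1$ the $b$-powers in $[a^n,b^m]$ stay $\leq \max\{-1,m\}$ and that for $m \geq 1$ the term $[c^{-n},b^{n+m}][b^{n+m},c^{-n+1}]$ occurs exactly once in $[a^{-n},b^m]$ — are essential: they pin down uniquely which original term can reach the extremal position. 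One should also handle the degenerate cases (all occurring $n$ negative, or the extremal $b$-exponent being negative) by the preliminary Nielsen moves $a \leftrightarrow b$, $a \mapsto a^{-1}$, $b \mapsto b^{-1}$, which act on $p_\omega$ in a transparent way ($t \mapsto t^{-1}$, sign changes, and the substitution swapping the roles of the two variables). Assembling these reductions, one reaches a basis in which $p_\omega$ has a genuinely extremal, uncancellable monomial, completing the proof.
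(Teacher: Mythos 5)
Your overall strategy matches the paper's: isolate an extremal generator $[a^{-N},b^M]$ by preliminary Nielsen moves, apply the base change of Proposition \ref{ab}, and argue that the resulting extremal term cannot be cancelled. But there is a genuine gap at the last step, and it is precisely the one the paper closes by using the variable base change $c_q = ab^q$ with $q\to\infty$ rather than a single fixed $c=ab$.

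Your final inference is: ``the coefficient $c'_{-N,N+M}$ is nonzero, so $p_\omega$ has a nonzero monomial.'' This does not follow. Having one nonzero coordinate in the abelianization $\Fb^{(1)}/\Fb^{(2)}$ never implies $p_\omega \neq 0$ --- that is exactly the phenomenon the Proposition is about (the map $\bar p\colon \Fb^{(1)}/\Fb^{(2)}\to\Zb[t,t^{-1}]$ has a large kernel). What you need is a nonzero coefficient of some power $t^k$, and in the basis $\{b,c\}$ the coefficient of $t^{N+M}$ is the \emph{weighted sum} $\sum_l c'_{l,N+M}\cdot l$ over all $c$-exponents $l$, not the single coordinate $c'_{-N,N+M}$. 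Your isolation argument only shows that the pair $(-N,N+M)$ arises uniquely; it does not prevent other original factors $[a^{n'},b^{m'}]$ from feeding into the $b$-exponent $N+M$ at a \emph{different} $c$-exponent. Concretely, from Proposition \ref{ab}, a factor $[a^{n'},b^{m'}]$ with $n'>0$ and $N+M \leq m' \leq N+M+n'-1$ contributes to $t^{N+M}$, and so does $[a^{-n'},b^{m'}]$ with $0<n'<N$ and $N+M-n'\leq m'\leq N+M-1$; nothing in your choice of $N$ and $M$ excludes such factors. For example, with $\omega=[a^{-2},b][a,b^3]$ one has $N=2$, $M=1$, $N+M=3$, and $c'_{-2,3}=1\neq 0$, yet the coefficient of $t^3$ in $p_\omega$ (basis $\{b,ab\}$) is $-\bigl(1\cdot(-2)+(-1)\cdot(-1)+1\cdot 1\bigr)=0$: the contribution of the extremal pair is exactly cancelled.

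The fix is the paper's: set $c_q := ab^q$ and let $q$ grow. Under iteration of $c\mapsto cb$, the extremal $b$-exponent produced by $[a^{-N},b^M]$ grows linearly at rate $N$ (reaching $qN+M$), while the $b$-exponents produced by any other factor $[a^{n'},b^{m'}]$ grow at rate $|n'|<N$ (for negative $n'$) or stay bounded (for positive $n'$). Thus for $q$ large enough the $b$-exponent $qN+M$ occurs in exactly one factor of the rewritten word, with no competitor at \emph{any} $c_q$-exponent, and the coefficient of $t^{qN+M}$ in $p_\omega$ is the (nonzero) exponent of $[a^{-N},b^M]$. Your closing remark (``Assembling these reductions\dots'') gestures at this, but the explicit argument with a single base change does not deliver it.
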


\begin{proof} 
The idea is to use the mechanism that is hidden in the proof of Proposition \ref{id}. Let $\omega \in \Fb^{(1)}$ and $\omega \notin \Fb^{(2)}$. Let us write
$$\omega =[a^{n_1},b^{m_1}]^{\nu_1}[a^{n_2},b^{m_2}]^{\nu_2} \cdots [a^{n_k},b^{m_k}]^{\nu_k}$$

Since $\omega \mapsto p_{\omega} \in \Zb[t,t^{-1}]$ is a homomorphism for \emph{any} basis of $\Fb$, we may freely rearrange the commutators in the product above. Moreover, we may assume that $(n_i , m_i)\neq (n_j,m_j)$ for $i \neq j$ and $\nu_i \in \Zb \setminus \{0\}$. 

Let $n:=\max \{|n_1|, \dots,|n_k|\}$. Without loss of generality we can assume that $n=-n_1$. Indeed, exchanging $a$ with $a^{-1}$, exchanges $n_i$ with $-n_i$, so that we may assume that $n_i<0$. Reordering the product allows to assume that $n=-n_1$. Again, reordering does not change $p_{\omega}$ in any basis, since only the class of $\omega$ in $\Fb^{(1)}/\Fb^{(2)}$ matters in our computation. In addition, we may assume that there exists $k' \in \Nb$ such that $n=n_1 = n_2 = \cdots = n_{k'}$ and $n \neq n_l$ for $l >k'$. Without loss generality, we have $m_1 > m_2 > \cdots> m_{k'}$ and set $m:=m_1$. Upon possibly replacing $b$ by $b^{-1}$, we may assume that $m>0$.

Let us now set $c:=ab \in \Fb$. We will now analyze how $\omega$ is written in terms of the basis $\{c,b\}$. By Proposition \ref{ab}, each factor $[a^{-n},b^{m_i}]$ of $\omega$ contains factors $[c^{-n},b^{n}]^{-1}$ and $[c^{-n},b^{n+m_i}]$ and these are the only factors in $\omega$ of the form $[c^{-n'}, b^k]$ for some $k \in \Zb$ and $n' \geq n$. Repeating this process, we can set $c_q := ab^q$ and consider the basis $\{c_q,b\}$. With respect to this basis $\omega$ will contain a factor of the form $[c_q^{-n}, b^{qn+m}]$. From Proposition \ref{ab} and the remarks after its statement, we conclude that for $q \in \Nb$ high enough, the factor $[c_q^{-n}, b^{qn+m}][b^{qn+m},c_q^{-n+1}]$ will be the only appearance of $b^{qn+m}$. Hence, computing $p_{\omega}$ with respect to the basis $\{b,c_q\}$, the coefficient of $t^{qn+m}$ will be non-zero. This proves the claim.
 \end{proof}

We are now ready to prove Theorem \ref{mainresult}.

\begin{proof}[Proof of Theorem \ref{mainresult}:]
Let $\omega \in \Fb \setminus \Fb^{(2)}$ be arbitrary.
If $\omega \not \in \Fb^{(1)}$, then Lemma \ref{notzero} proves the claim. Hence, we may assume $\omega \in \Fb^{(1)}$ and $\omega \not \in \Fb^{(2)}$. By Proposition \ref{new}, there exists a basis of $\Fb$ such that $p_{\omega}(t)\neq 0$. The claim follows from Corollary \ref{corlem}.
\end{proof}
\section{Special families of words and open problems}
\label{engelwords}

\subsection{Engel words}
In this last section we study Engel words and show that the associated word maps are always surjective. Corresponding results for finite simple groups were proved in \cite{Engel}.

\begin{definition} Let $\Fb$ be the free group on two generators $\{a,b\}$.
The $k$-th Engel word $e_k(a,b) \in \Fb$ is defined recursively by the equations:
\begin{eqnarray*} e_0(a,b)&=&a,
\\ e_k(a,b)&=& [e_{k-1},b], \quad k\geq1.
\end{eqnarray*}
\end{definition}

For a group $G$, the corresponding map $e_k:G\times G \rightarrow G$ is called the $k$-th Engel word map. We are now ready to prove Theorem \ref{engeltheorem}
\begin{proof}[Proof of Theorem \ref{engeltheorem}:]
We want to compute $p_{e_k}$ with respect to the basis $\{b,a\}$.
First of all, it is easy to see that $^{b}[b^{m},a^{n}]=[b^{m+1},a^{n}][b,a^{n}]^{-1}$ for $n,m \in \mathbb{Z}$. Indeed, we just compute
\begin{eqnarray*}
^{b}[b^{m},a^{n}] &=& bb^ma^nb^{-m}a^{-n}b^{-1} \\
&=& b^{m+1} a^n b^{-(m+1)} a^{-n} a^n b a^{-n} b^{-1}\\
&=&[b^{m+1},a^{n}][b,a^{n}]^{-1}.
\end{eqnarray*}
This shows that if $p_{\omega}(t) = \sum_{i} a_i t^i$, then
$p_{{}^b \omega}(t) = \sum_{i} a_i t (t^{i} -1) = t p_{\omega}(t) - tp_{\omega}(1)$. Hence, 
$$p_{[\omega,b]}(t) = p_{\omega} (t) - p_{{}^b \omega}(t) = (1-t) p_{\omega}(t) + t p_{\omega}(1).$$
Since $p_{e_1}(t) = p_{[b,a]^{-1}}(t) = 1-t$ we conclude that $p_{e_k}(t) = (1-t)^k$ for all $k \in \Nb$. Lemma \ref{large} implies the claim.
\end{proof}

\subsection{Open problems}
It is clear that the method presented in this paper has serious limitations and cannot possible work for words $\omega \in \Fb^{(2)}$. A first non-trivial case is $\omega(a,b)=[[a,b],[a^2,b^2]]$. It is unknown to us if Larsen's question has a positive answer for this word.

\begin{question} Let $\Fb$ be the free group on two generators $\{a,b\}$ and let $\omega = [[a,b],[a^2,b^2]]$. Is the associated word map $\omega \colon SU(n) \times SU(n) \to SU(n)$ surjective for all but finitely many $n \in \Nb$.
\end{question}

If $\omega \not\in \Fb^{(2)}$ it would be desirable to find out if the restrictions on $n \in \Nb$ in Theorem \ref{mainresult} are necessary. We are not aware of a word $\omega \not \in \Fb^{(2)}$, where the associated word map is not surjective for \emph{all} $n \in \Nb$. In order to understand this problem, we need to understand the map $\omega \mapsto p_{\omega}$ more directly. We can endow $\Zb[t,t^{-1}]$ with a $\Zb \Fb$-module structure such that $a \cdot f(t) = tf(t)$ and $b \cdot f(t) = f(t)$ for all $f(t) \in \Zb[t,t^{-1}]$. The quotient $\Fb^{(1)}/\Fb^{(2)}$ is also a $\Zb \Fb$-module, where the module structure is induced from the conjugation action.
Since $p_{\omega}$ is well-defined on $\Fb^{(1)}/\Fb^{(2)}$, it is natural to study the induced map
$$\bar p \colon \Fb^{(1)}/\Fb^{(2)} \to \Zb[t,t^{-1}].$$
\begin{lem}
The map $\bar p$ is a homomorphism of $\Zb \Fb$-modules.
\end{lem}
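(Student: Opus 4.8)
The plan is to verify the two module-structure conditions separately, since $\Z\Fb$ is generated as a ring by $a$, $a^{-1}$, $b$, $b^{-1}$, and since $\bar p$ is already known to be additive (it is induced by the homomorphism $p_\omega$ on $\Fb^{(1)}$, which descends to $\Fb^{(1)}/\Fb^{(2)}$ because $\Z[t,t^{-1}]$ is abelian). So it suffices to check that $\bar p({}^b \bar\omega) = b \cdot \bar p(\bar\omega) = \bar p(\bar\omega)$ and that $\bar p({}^a \bar\omega) = a \cdot \bar p(\bar\omega) = t\, \bar p(\bar\omega)$ for all $\omega \in \Fb^{(1)}$; the statements for $b^{-1}$ and $a^{-1}$ then follow by applying these to ${}^{b^{-1}}\omega$ and ${}^{a^{-1}}\omega$ respectively. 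Because both sides of each identity are additive in $\omega$ and the commutators $[a^n,b^m]$ generate $\Fb^{(1)}$ freely, it is enough to verify each identity on a single generator $\omega = [a^n, b^m]$ with $nm \neq 0$.

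First I would handle conjugation by $b$. The computation $^{b}[a^n,b^m] = {}^{b}[b^{-m},a^{-n}]$ is closely parallel to the one already carried out in the proof of Theorem~\ref{engeltheorem}: expanding $b \cdot b^{-m} a^{-n} b^{m} a^{n} \cdot b^{-1}$ and inserting $a^{-n} b a^{-n}$-type cancellations shows that, modulo $\Fb^{(2)}$, the class of ${}^b[a^n,b^m]$ differs from that of $[a^n,b^m]$ only by commutators $[a^n,b^{\pm 1}]$-type terms whose $p$-values telescope. Concretely one finds $p_{{}^b[a^n,b^m]}(t) = m(t^n-1) = p_{[a^n,b^m]}(t)$, so indeed $\bar p({}^b\bar\omega) = \bar p(\bar\omega) = b\cdot\bar p(\bar\omega)$. (Alternatively, one can argue more slickly: since $p_{[a^n,b^m]}$ only records the image in the abelianization of the "$b$-exponent," and conjugation by $b$ fixes $b$, this invariance is essentially automatic once one checks it on generators.)

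The substantive case is conjugation by $a$. Here I would use exactly Proposition~\ref{ab} with $c := ab$, or rather its mirror image: the point is that ${}^a[a^n,b^m] = {}^a[a^n, b^m]$ can be expressed in the basis $\{a, b\}$ via a formula of the same shape as \eqref{eq4}, from which one reads off that the class of ${}^a[a^n,b^m]$ in $\Fb^{(1)}/\Fb^{(2)}$ equals that of a product of $[a^i,b^j]$'s whose total $p$-value is $t \cdot m(t^n-1) = t\, p_{[a^n,b^m]}(t)$. The cleanest route is to use \eqref{eq1} directly: it writes ${}^a[c^n,b^m]$ in the $\{c,b\}$-basis, and by the substitution $c \leftrightarrow a$ (legitimate since $\{c,b\}$ and $\{a,b\}$ are both bases and $p$ is defined relative to a basis) it equally writes ${}^a[a^n,b^m]$ in the $\{a,b\}$-basis as $[a,b^{-1}][b^{-1},a^{n+1}][a^{n+1},b^{m-1}][b^{m-1},a]$, whose $p$-value is $-(-1)-(-1)(t^{n+1}-1)+(m-1)(t^{n+1}-1)+(m-1)\cdot 0 = m(t^{n+1}-t) = t\,p_{[a^n,b^m]}(t)$. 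This is precisely $a \cdot \bar p(\bar\omega)$.

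I expect the main obstacle to be purely bookkeeping rather than conceptual: making sure that the reduction to generators is airtight (that both sides of the claimed module identities are genuinely additive, including the interaction of "${}^a(-)$" with products, which uses ${}^a(xy) = {}^ax\,{}^ay$), and that one consistently remembers $p_\omega$ is computed with respect to a fixed basis so that the substitution trick in the last paragraph is valid. Once generators are isolated, each verification is a short commutator manipulation of the type already displayed in the paper, so no genuinely new difficulty arises.
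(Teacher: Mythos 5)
Your global strategy is sound and parallels the paper's: reduce by linearity to the generators $\xi_{n,m} := [a^n,b^m] \bmod \Fb^{(2)}$, check compatibility with conjugation by $a$ and by $b$ separately, and observe that $a^{-1}$ and $b^{-1}$ come for free. The $b$-case, once the typo $[a^n,b^m] = [b^{-m},a^{-n}]$ is replaced by the correct $[a^n,b^m] = [b^m,a^n]^{-1}$, does reduce to the identity ${}^b[b^m,a^n] = [b^{m+1},a^n][b,a^n]^{-1}$ from the Engel-word computation and so goes through.

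The $a$-case, however, has a genuine gap. Equation \eqref{eq1} expresses ${}^a[c^n,b^m]$ in the basis $\{c,b\}$, with $a = cb^{-1}$. Applying the automorphism $c \mapsto a$, $b \mapsto b$ to both sides renames the conjugating element $a = cb^{-1}$ to $ab^{-1}$, not to $a$. What the ``substitution trick'' actually gives is
\[
{}^{ab^{-1}}[a^n,b^m] = [a,b^{-1}][b^{-1},a^{n+1}][a^{n+1},b^{m-1}][b^{m-1},a],
\]
not the claimed formula for ${}^a[a^n,b^m]$; the two differ already for $n=m=1$. (Your subsequent $p$-value bookkeeping also misstates the first and last summands — they should be $-(t-1)$ and $-(m-1)(t-1)$, not $-(-1)$ and $0$ — although by coincidence the final answer you wrote is right.) One could salvage the route by noting that the $b$-case plus the $ab^{-1}$-case together imply the $a$-case, but at that point the detour through Proposition \ref{ab} is pure overhead. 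The paper instead checks directly, via two one-line commutator manipulations, that
\[
{}^a[a^n,b^m] = [a^{n+1},b^m][a,b^m]^{-1}, \qquad {}^b[a^n,b^m] = [a^n,b]^{-1}[a^n,b^{m+1}],
\]
from which $\bar p(a\cdot\xi_{n,m}) = m(t^{n+1}-t) = t\,\bar p(\xi_{n,m})$ and $\bar p(b\cdot\xi_{n,m}) = \bar p(\xi_{n,m})$ fall out immediately; you should use these rather than try to import \eqref{eq1}.
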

\begin{proof} We denote the class of $[a^n,b^m]$ in $\Fb^{(1)}/\Fb^{(2)}$ by $\xi_{n,m}$. Hence, 
$\bar p(\xi_{n,m}) = m(t^n-1)$ by definition.
It follows from the equations $^{a}[a^{n},b^{m}]=[a^{n+1},b^{m}][a,b^{m}]^{-1}$ and
$^{b}[a^n,b^m]=[a^{n},b]^{-1} [a^{n}, b^{m+1}]$
that
$\bar p(a \cdot \xi_{n,m})= \bar p(\xi_{n+1,m} - \xi_{1,m}) = m(t^{n+1}-t) = t \cdot \bar p(\xi_{n,m})$ and
$\bar p(b \cdot \xi_{n,m})=\bar p(\xi_{n,m+1} - \xi_{n,1}) = m(t^n-1) = \bar p(\xi_{n,m})$. This finishes the proof.
\end{proof}

For $\xi \in \Fb^{(1)}/\Fb^{(2)}$ and a general automorphism $\alpha \in {\rm Aut}(\Fb)$, the relation between $\bar p(\xi)$ and $\bar p(\alpha(\xi))$ remains obscure.

\begin{question} Let $\xi \in \Fb^{(1)}/ \Fb^{(2)}$. Is there an automorphism $\alpha \in {\rm Aut}(\Fb)$ such that the only root of the polynomial $\bar p(\alpha(\xi))$ which is a root of unity is equal to one.
\end{question}

A positive answer to this question would remove the restrictions on $n \in \Nb$ in Theorem \ref{mainresult}.

\section*{Acknowledgment}

The research of A.T.\ was supported by ERC. 
A.E. wants to thank the IMPRS Leipzig and the MPI-MIS Leipzig for support and an excellent research environment.

\begin{bibdiv} 
\begin{biblist}

\bib{Engel}{article}{
   author={Bandman, Tatiana},
   author={Grunewald, Fritz},
   author={Garion, Shelly},
   title={On the Surjectivity of Engel Words on $PSL(2,q)$},
   journal={arxiv:1008.1397},
   status={to appear in Journal of Groups, Geometry, and Dynamics},
}

\bib{borel}{article}{
   author={Borel, Amand},
   title={On free subgroups of semisimple groups},
   journal={Enseign. Math. (2)},
   volume={29},
   date={1983},
   number={1-2},
   pages={151--164},
 }

\bib{goto}{article}{
   author={Got{\^o}, Morikuni},
   title={A theorem on compact semi-simple groups},
   journal={J. Math. Soc. Japan},
   volume={1},
   date={1949},
   pages={270--272},
}

\bib{lar1}{article}{
   author={Larsen, Michael},
   title={Word maps have large image},
   journal={Israel J. Math.},
   volume={139},
   date={2004},
   pages={149--156},
}

\bib{lar2}{article}{
   author={Larsen, Michael},
   author={Shalev, Aner},
   title={Word maps and Waring type problems},
   journal={J. Amer. Math. Soc.},
   volume={22},
   date={2009},
   number={2},
   pages={437--466},
}

\bib{lar3}{article}{
   author={Larsen, Michael},
   author={Shalev, Aner},
   author={Tiep, Pham Huu},
   title={The Waring problem for finite simple groups},
   journal={Ann. of Math. (2)},
   volume={174},
   date={2011},
   number={3},
   pages={1885--1950},
 }

\bib{MR2654085}{article}{
   author={Liebeck, Martin W.},
   author={O'Brien, E. A.},
   author={Shalev, Aner},
   author={Tiep, Pham Huu},
   title={The Ore conjecture},
   journal={J. Eur. Math. Soc.},
   volume={12},
   date={2010},
   number={4},
   pages={939--1008},
}

\bib{Mark}{book}{
   author={Sepanski, Mark R.},
   title={Compact Lie groups},
   series={Graduate Texts in Mathematics},
   volume={235},
   publisher={Springer},
   place={New York},
   date={2007},
   pages={xiv+198},
}

\bib{Serre}{book}{
   author={Serre, Jean-Pierre},
   title={Trees},
   series={Springer Monographs in Mathematics},
   publisher={Springer-Verlag},
   place={Berlin},
   date={2003},
   pages={x+142},
}

\bib{thom}{article}{
   author={Thom, Andreas},
   title={Convergent sequences in discrete groups},
   journal={arXiv:1003.4093},
   status={to appear in Can. Math. Bulletin}
}

\bib{MR0035283}{article}{
   author={T{\^o}yama, Hiraku},
   title={On commutators of matrices},
   journal={K\=odai Math. Sem. Rep.,},
   volume={1},
   date={1949},
   number={5-6},
   pages={1--2},
 }

\end{biblist}
\end{bibdiv}


\end{document}